\date{\today}
\newcommand{\w}{\omega}
\newcommand{\IZ}{\mathbb Z}
\newcommand{\zero}{0}
\newcommand{\one}{\infty}
\newtheorem{theorem}{Theorem}%[section]
\newtheorem{proposition}{Proposition}
\newtheorem{corollary}{Corollary}
\theoremstyle{definition}
\newtheorem{example}{Example}%[section]
\newtheorem{remark}{Remark}%[section]
\newtheorem{definition}{Definition}%[section]
\begin{document}

\title[Topological properties of Taimanov semigroups]{Topological properties of Taimanov semigroups}

\author{Oleg~Gutik}
\address{Faculty of Mathematics, National University of Lviv,
Universytetska 1, Lviv, 79000, Ukraine}
\email{o\underline{\hskip5pt}\,gutik@franko.lviv.ua,
ovgutik@yahoo.com}

\keywords{Taimanov semigroup, semitopological semigroup, topological semigroup, zero-semigroup.}

\subjclass[2010]{22A15, 22A25, 54A10, 54D40, 54H10.}

\begin{abstract}
A semigroup $T$ is called {\em Taimanov} if $T$ contains two distinct elements $\zero,\one$ such that $xy=\one$ for any distinct points $x,y\in T\setminus\{\zero,\one\}$ and $xy=\zero$ in all other cases.  We prove that any Taimanov semigroup $T$ has the following topological properties: (i) each $T_1$-topology with continuous shifts on $T$ is discrete; (ii) $T$ is closed in each $T_1$-topological semigroup containing $T$ as a subsemigroup; (iii) every non-isomorphic homomorphic image $Z$ of $T$ is a zero-semigroup and hence $Z$ is a topological semigroup in any topology on $Z$.
\end{abstract}

\maketitle

We shall follow the terminology of~\cite{Carruth-Hildebrant-Koch-1983-1986, Clifford-Preston-1961-1967, Engelking-1989, Ruppert-1984}. % We identify all cardinals with the corresponding them initial ordinals. A (\emph{semi})\emph{topological semigroup} is a topological space endowed with a (separately) continuous semigroup operation. A topology $\tau$ on a semigroup $S$ is defined to be {\em shift-continuous} if for every $a\in S$ the left and right shifts $l_a\colon S\to S$, $l_a:x\mapsto ax$, and $r_a:S\to S$, $r_a:x\mapsto xa$, are continuous. Observe that a topology $\tau$ on a semigroup $S$ is shift-continuous if and only if $(S,\tau)$ is a semitopological semigroup. A topology $\tau$ on a semigroup $S$ is said to be a \emph{semigroup topology}  if $(S,\tau)$ is a topological semigroup.

The problem of non-discrete (Hausdorff) topologization of infinite groups was posed by Markov \cite{Markov-1945}. %Pontryagin gave well known conditions for a base at the unity of a group for its non-discrete topologization (see Theorem~4.5 of \cite{Hewitt-Roos-1963} or Theorem~3.9 of \cite{Pontryagin-1966}).
This problem was resolved by Ol'shanskiy \cite{Olshansky-1980} who constructed an infinite countable group $G$ admitting no non-discrete Hausdorff group topologies. On the other hand, Zelenyuk \cite{Zel} proved that each group $G$ admits a non-discrete shift-continuous Hausdorff topology $\tau$ with continuous inversion $G\to G$, $x\mapsto x^{-1}$. In \cite[2.10]{BPS} it was observed that Ol'shanskiy construction can be modified to produce for every non-zero $m\in\IZ\setminus\{-2^{n},2^n:n\in\w\}$ a countable infinite group $G_m$ admitting no non-discrete shift-continuous topology with continuous $m$-th power map $G_m\to G_m$, $x\mapsto x^m$.

Studying the topologizability problem in the class of inverse semigroups, Eberhart and Selden  \cite{Eberhart-Selden-1969} proved that every Hausdorff semigroup topology on the bicyclic semigroup $\mathscr{C}(p,q)$ is discrete. This result was generalized by Bertman and West \cite{Bertman-West-1976} who proved that every Hausdorff shift-continuous topology on $\mathscr{C}(p,q)$ is discrete. In \cite{Bardyla-2016,Bardyla-Gutik-2016,Chuchman-Gutik-2010,Chuchman-Gutik-2011,Fihel-Gutik-2011, {Gutik-2015}, Gutik-Maksymyk-2016??, Gutik-Pozdnyakova-2014, Gutik-Repovs-2011, Gutik-Repovs-2012, Mesyan-Mitchell-Morayne-Peresse-2016} these topologizability results were extended to some generalizations of the bicyclic semigroup.

Studying the topologizability problem in the class of commutative semigroups \cite{Taimanov-1975}, Taimanov in \cite{Taimanov-1973} constructed a commutative semigroup $\mathfrak A_\kappa$ of arbitrarily large cardinality $\kappa$, which admits no non-discrete Hausdorff semigroup topology, but any non-isomorphic homomorphic image $Z$ of $T$ is a zero-semigroup and hence is a topological semigroup in any topology on $Z$. We recall that a semigroup $Z$ is a {\em zero-semigroup} if the set $SS=\{xy:x,y\in X\}$ is a singleton $\{z\}$. In this case the element $z$ is the {\em zero-element} of the semigroup $S$, i.e., a (unique) element $z\in S$ such that $xz=z=zx$ for al $x\in S$. In this paper we improve the mentioned Taimanov's result proving that the Taimanov semigroup $\mathfrak A_\kappa$ admits no non-discrete shift-continuous $T_1$-topologies and is closed in any $T_1$-topological semigroup containing $\mathfrak A_\kappa$ as a subsemigroup. First we give an abstract definition of a Taimanov semigroup.

\begin{definition} {\rm A semigroup $T$ is called {\em Taimanov} if it contains two distinct elements $\zero_T,\one_T$  such that for any $x,y\in T$
$$x\cdot y=\begin{cases}\one_T&\mbox{if $x\ne y$ and $x,y\in T\setminus\{\zero_T,\one_T\}$};\\
\zero_T&\mbox{if $x=y$ or $\{x,y\}\cap\{\zero_T,\one_T\}\ne\emptyset$}.
\end{cases}
$$The elements $\zero_T,\one_T$ are uniquely determined by the algebraic structure of $T$: $\zero_T$ is a (unique) zero-element of $T$, and $\one_T$ is the unique element of the set $TT\setminus\{\zero_T\}$.
}\end{definition}

It follows that each Taimanov semigroup $T$ is commutative.
Concrete examples of Taimanov semigroups can be constructed as follows.

\begin{example} {\rm For any non-zero cardinal $\kappa$ the set $\kappa\cup\{\kappa\}$ endowed with the commutative semigroup operation defined by
$$xy=\begin{cases}\kappa&\mbox{if $x\ne y$ and $x,y\in T\setminus\{\zero,\kappa\}$};\\
0&\mbox{if $x=y$ or $\{x,y\}\cap\{\zero,\kappa\}\ne\emptyset$}.
\end{cases}
$$ is a Taimanov semigroup of cardinality $1+\kappa$. Here we identify the cardinal $\kappa$ with the set $[0,\kappa)$ of ordinals, smaller than $\kappa$.}
\end{example}

\begin{proposition} Two Taimanov semigroups are isomorphic if and only if they have the same cardinality.
\end{proposition}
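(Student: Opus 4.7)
The \emph{only if} direction is immediate since isomorphic semigroups are in bijection. For the \emph{if} direction, my plan is to exploit the observation (already recorded in the definition) that the two distinguished elements $\zero_T$ and $\one_T$ are intrinsic to the algebraic structure: $\zero_T$ is the unique zero and $\one_T$ is the unique nonzero element of $TT$. Thus any isomorphism is forced to send zero to zero and one to one, and the real content lies in bijecting the remaining ``generic'' elements.

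So, given two Taimanov semigroups $T_1,T_2$ of equal cardinality, I would first compare the cardinalities of the stripped sets $T_i^\circ:=T_i\setminus\{\zero_{T_i},\one_{T_i}\}$. In the finite case $|T_i^\circ|=|T_i|-2$, and in the infinite case $|T_i^\circ|=|T_i|$, so in either case $|T_1^\circ|=|T_2^\circ|$. Pick any bijection $\varphi^\circ\colon T_1^\circ\to T_2^\circ$ and extend it to a bijection $\varphi\colon T_1\to T_2$ by setting $\varphi(\zero_{T_1})=\zero_{T_2}$ and $\varphi(\one_{T_1})=\one_{T_2}$.

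The last step is to verify that $\varphi$ is a semigroup homomorphism. This is a finite case-check against the multiplication rule in the definition: if $x=y$ or one of $x,y$ lies in $\{\zero,\one\}$, then both $\varphi(xy)$ and $\varphi(x)\varphi(y)$ equal $\zero_{T_2}$ (using that $\varphi^\circ$ is injective, so $\varphi(x)=\varphi(y)$ iff $x=y$); and if $x,y\in T_1^\circ$ are distinct, then $\varphi(x),\varphi(y)\in T_2^\circ$ are distinct, so $\varphi(xy)=\varphi(\one_{T_1})=\one_{T_2}=\varphi(x)\varphi(y)$.

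There is no genuine obstacle; the only thing worth being careful about is the cardinality arithmetic $|T_i^\circ|=|T_i|$, which uses the convention that the empty cardinal and the cardinals $1$ are excluded because a Taimanov semigroup has at least two elements by definition, and for finite cardinals one simply subtracts $2$ on both sides.
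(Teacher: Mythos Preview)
Your proof is correct and follows essentially the same approach as the paper: construct a bijection sending $\zero_{T_1}\mapsto\zero_{T_2}$ and $\one_{T_1}\mapsto\one_{T_2}$, and check that any such bijection is an isomorphism. The paper's proof is a one-line version of yours, omitting the cardinality arithmetic for $T_i^\circ$ and the explicit case-check that you carry out.
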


\begin{proof} Given two Taimanov semigroups $T,S$ of the same cardinality, observe that any bijective map
$f:T\to S$ with $f(\zero_T)=\zero_S$ and  $f(\one_T)=\one_S$ is an algebraic isomorphism of $T$ onto $S$.
\end{proof}

In this paper we show that any Taimanov semigroup $T$ has the following topological properties:
%\vskip-25pt{\phantom{mmm}}
\begin{itemize}\itemsep=1pt\parskip1pt
  \item[(1)] every shift-continuous $T_1$-topology on $T$ is discrete;
  \item[(2)] $T$ is closed in each $T_1$-topological semigroup containing $T$ as a subsemigroup;
  \item[(3)] every non-isomorphic homomorphic image $Z$ of $T$ is a zero-semigroup and hence any topology on $Z$ turns it into a topological semigroup.
\end{itemize}

The first statement generalizes the original result of Taimanov \cite{Taimanov-1973} and is proved in the following proposition.

\begin{proposition}\label{proposition-3} Every shift-continuous $T_1$-topology $\tau$ on any Taimanov semigroup $T$ is discrete.
\end{proposition}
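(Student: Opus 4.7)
The plan is to exploit shift-continuity together with the $T_1$-axiom and the very restrictive multiplication table of $T$ in order to show that every point of $T$ is $\tau$-isolated. Fix an arbitrary $a\in T\setminus\{\zero_T,\one_T\}$; such $a$ exists because the Taimanov definition forces $|T|\ge 4$ (one needs two distinct elements outside $\{\zero_T,\one_T\}$ to realise $\one_T$ as a product). Inspecting the multiplication rule, the left shift $L_a\colon x\mapsto a\cdot x$ is essentially two-valued: it sends $x$ to $\one_T$ precisely when $x\in T\setminus\{\zero_T,\one_T,a\}$, and to $\zero_T$ on the three-point set $\{\zero_T,\one_T,a\}$.

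Given this computation, the rest is a short Boolean manipulation. Since $\{\one_T\}$ is closed by $T_1$ and $L_a$ is continuous, its preimage $T\setminus\{\zero_T,\one_T,a\}$ is closed, making the three-element set $\{\zero_T,\one_T,a\}$ open in $T$. Intersecting with the open sets $T\setminus\{\zero_T\}$ and $T\setminus\{\one_T\}$ (both open by $T_1$) gives
\[
\{\zero_T,\one_T,a\}\cap(T\setminus\{\zero_T\})\cap(T\setminus\{\one_T\})=\{a\},
\]
so $\{a\}$ is open. As $a\in T\setminus\{\zero_T,\one_T\}$ was arbitrary, every such point is $\tau$-isolated.

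To isolate $\zero_T$ and $\one_T$ themselves, I again fix any $a\in T\setminus\{\zero_T,\one_T\}$: by the previous step $\{a\}$ is clopen, hence $\{\zero_T,\one_T\}=\{\zero_T,\one_T,a\}\setminus\{a\}$ is open, and intersecting this open set with $T\setminus\{\one_T\}$ (respectively, $T\setminus\{\zero_T\}$) isolates $\zero_T$ (respectively, $\one_T$). This exhausts all points, so $\tau$ is discrete. I do not anticipate any real obstacle: the entire argument rests on the single combinatorial observation about $L_a^{-1}(\{\one_T\})$, after which $T_1$-ness plus continuity of a \emph{single} shift produce all the clopen sets needed by routine Boolean combinations.
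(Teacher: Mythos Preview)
Your argument is correct and is in fact a bit slicker than the paper's. Both proofs hinge on the same combinatorial fact---that for $a\notin\{\zero_T,\one_T\}$ the shift $L_a$ takes the value $\one_T$ exactly on $T\setminus\{\zero_T,\one_T,a\}$---but they exploit it differently. The paper proceeds in the opposite order: it first isolates $\zero_T$ and $\one_T$ by a neighbourhood-chasing argument (find $U_0\ni\zero_T$ and $U_{\!\one}\ni\one_T$ with $a\cdot(U_0\cup U_{\!\one})\subset T\setminus\{\one_T\}$ and argue each is essentially a singleton), and only then isolates the generic points. You instead read off the whole preimage $L_a^{-1}(\{\one_T\})$ at once, observe it is closed, and finish by pure Boolean algebra with $T_1$-complements; this avoids two separate $\varepsilon$--$\delta$ style passages and uses continuity of a single shift. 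What the paper's route buys is only that it does not need to pass through the intermediate open set $\{\zero_T,\one_T,a\}$; otherwise your version is shorter.

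One small remark on the side claim ``the Taimanov definition forces $|T|\ge 4$'': the paper's Definition literally only asks for two distinguished elements satisfying the multiplication rule, and its proof of Proposition~8 explicitly treats the case $|T|\le 3$, so the author does seem to allow small Taimanov semigroups (for which the clause ``$\one_T$ is the unique element of $TT\setminus\{\zero_T\}$'' is then vacuous). This does not affect your proof: you only use the existence of \emph{one} point $a\in T\setminus\{\zero_T,\one_T\}$, which already holds once $|T|\ge 3$, and if $|T|=2$ every $T_1$-topology is discrete anyway. It would be cleanest to replace the parenthetical justification by ``the finite case is trivial, so we may pick $a\in T\setminus\{\zero_T,\one_T\}$''---exactly as the paper does.
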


\begin{proof} The statement is trivial if the semigroup $T$ is finite. So, assume that $T$ is infinite. The topology $\tau$ satisfies the separation axiom $T_1$ and hence contains an open set $U\subset X$ such that $\zero_T\in U$ and $\one_T\notin U$.

First we prove that the points $\zero_T$ and $\one_T$ are isolated in $T$. Chose any point $x\in T\setminus\{\zero_T,\one_T\}$ and observe that $x\cdot \zero_T=x\cdot \one_T=\zero_T\in U$. By the shift-continuity of the topology $\tau$, there exist neighborhoods $U_0\in\tau$ of $\zero_T$ and $U_{\!\one}\in\tau$ of $\one_T$ such that $(x\cdot U_0)\cup (x\cdot U_{\!\one})\subset U$. We claim that $U_0\setminus\{x,\one_T\}=\{\zero_T\}$ and $U_{\!\one}\setminus\{x,\zero_T\}=\{\one_T\}$. In the opposite case we could find a point $y\in (U_0\cup U_{\!\one})\setminus\{x,\zero_T,\one_T\}$ and conclude that $\one_T=xy\in x\cdot(U_0\cup U_{\!\one})\subset U\subset T\setminus\{\one_T\}$, which is a desired contradiction showing that the points $\zero_T$ and $\one_T$ are isolated in $T$.

To show that each point $x\in T\setminus\{\zero_T,\one_T\}$ is isolated in the topology $\tau$, observe that $xx=\zero_T\in T\setminus \{\one_T\}\in\tau$ and use the shift-continuity of the topology $\tau$ to find a neighborhood $U_x\in\tau$ of $x$ such that $xU_x\subset T\setminus\{\one_T\}$. Assuming that $U_x\ne\{x\}$ we can choose any point $y\in U_x\setminus\{x\}$ and conclude that $\one_T=xy\in T\setminus\{\one_T\}$, which is a contradiction showing that $U_x=\{x\}$ and hence the point $x$ is isolated in the topology $\tau$.
\end{proof}

The following example shows that any infinite Taimanov semigroup admits a non-discrete semigroup $T_0$-topology.

\begin{example}\label{examle-4} {\rm For any infinite Taimanov semigroup $T$ the family of subsets
\vskip5pt

$
\centerline{$\tau:=\big\{U\subset T:$ if $\zero_T\in U$, then $\one_T\in U$ and  $|T\setminus U|<\w\big\}$}
$
\vskip5pt

\noindent is a $T_0$-topology turning $T$ into a topological semigroup.}
\end{example}

 A semitopological semigroup $S$ will be called {\em square-topological} if the map $S\to S$, $x\mapsto x^2$, is continuous. It is clear that each topological semigroup is square-topological.

\begin{theorem}\label{theorem-5} A Taimanov semigroup $T$ is closed in any square-topological semigroup $S$ containing $T$ as a subsemigroup and satisfying the separation axiom $T_1$.
\end{theorem}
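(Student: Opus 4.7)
The plan is to argue by contradiction: suppose there is a point $a$ in the closure of $T$ in $S$ that does not lie in $T$, pick a net $(x_i)_{i\in I}$ in $T$ converging to $a$, and force a clash by computing the limit of the net $(x_i\cdot a)$ in two incompatible ways.

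Since $a\notin T$ we have $a\notin\{\zero_T,\one_T\}$. Because $S$ is $T_1$, I can pick an open neighborhood $W$ of $a$ with $W\cap\{\zero_T,\one_T\}=\emptyset$; then $x_i$ is eventually in $T\setminus\{\zero_T,\one_T\}$. The first observation is that $a^2=\zero_T$: continuity of the squaring map gives $x_i^2\to a^2$, while $x_i^2=\zero_T$ for every $i$, and in a $T_1$ space the eventually constant net at $\zero_T$ can only converge to $\zero_T$.

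The second, and main, observation is that $x\cdot a=\one_T$ for every $x\in T\setminus\{\zero_T,\one_T\}$. For such a fixed $x$, the same $T_1$ argument provides a neighborhood of $a$ avoiding the finite set $\{\zero_T,\one_T,x\}$, so $x_i$ is eventually in $T\setminus\{\zero_T,\one_T,x\}$; by the Taimanov rule, $x\cdot x_i=\one_T$ for all such $i$. Continuity of the left shift $s\mapsto xs$ then forces $xa=\one_T$.

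The contradiction now falls out by applying the previous step with $x$ replaced by $x_i$ itself: as soon as $i$ is large enough that $x_i\in T\setminus\{\zero_T,\one_T\}$, we have $x_i\cdot a=\one_T$, so the net $(x_i\cdot a)$ is eventually constantly $\one_T$ and in particular converges only to $\one_T$. On the other hand, continuity of the right shift $s\mapsto s\cdot a$, which is available because $S$ is semitopological, combined with $x_i\to a$ gives $x_i\cdot a\to a\cdot a=a^2=\zero_T$. Hence $\zero_T=\one_T$, contradicting the definition of a Taimanov semigroup. The only delicate point is the standard ``$T_1$ plus eventually constant implies unique limit'' bookkeeping; note that the square-topological hypothesis enters only through the identity $a^2=\zero_T$, while the rest relies purely on shift continuity.
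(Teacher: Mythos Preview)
Your proof is correct and follows essentially the same route as the paper's: both arguments pin down the value of $a^2$ (your $a$, the paper's $s$) in two incompatible ways, obtaining $a^2=\zero_T$ from the continuity of squaring and $a^2=\one_T$ from shift-continuity via the intermediate fact that $xa=\one_T$ for all $x\in T\setminus\{\zero_T,\one_T\}$. The only cosmetic differences are that you phrase everything with nets while the paper works directly with open neighbourhoods of the accumulation point, and you establish $a^2=\zero_T$ first whereas the paper first proves $ss=\one_T$ and then derives the squaring contradiction.
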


\begin{proof} Assuming that $T$ is not closed in $S$, choose any point $s\in\bar T\setminus T$. We claim that $sx=\one_T$ for any $x\in T\setminus\{\zero_T,\one_T\}$. Assuming that $sx\ne \one_T$ and using  the shift-continuity of the $T_1$-topology of $S$, we can find a neighborhood $U_s\subset S$ of $s$ such that $U_s\cdot x\subset S\setminus\{\one_T\}$. Since $s$ is an accumulation point of the set $T$ in $S$, there exists a point $y\in U_s\setminus\{x,\zero_T,\one_T\}$. For this point $y$ we get $\one_T=yx\in U_sx\subset S\setminus\{\one_T\}$, which is a contradiction showing that $sx=\one_T$ for any $x\in T\setminus\{\zero_T,\one_T\}$. Next, we show that $ss=\one_T$. Assuming that $ss\ne \one_T$, we can use the shift-continuity of the $T_1$-topology of $S$ to find a neighborhood $V_s\subset S$ of $s$ such that $sV_s\subset S\setminus\{\one_T\}$. Since $s$ is an accumulation point of the set $T$ in $S$, there exists a point $x\in V_s\cap T\setminus\{\zero_T,\one_T\}$. For this point $x$, we get $\one_T=sx\in sV_s\subset S\setminus\{\one_T\}$, which is a contradiction showing that $ss=\one_T$. By the separation axiom $T_1$, the set $S\setminus\{\zero_T\}$ is an open neighborhood of $\one_T$ in $S$. The continuity of the map $S\to S$, $x\mapsto x^2$, yields a neighborhood $W_s\subset S$ such that $x^2\in S\setminus\{\zero_T\}$ for any $x\in W_s$. Since $s$ is an accumulation point of the set $T$ in $S$, there exists a point $x\in W_s\cap T\setminus\{\zero_T,\one_T\}$. For this point $x$ we get $\zero_T=xx\in S\setminus\{\zero_T\}$, which is a desired contradiction showing that the set $T$ is closed in $S$.
\end{proof}

The following example shows that any infinite Taimanov semigroup admits a (non-closed) embedding into a compact Hausdorff semitopological semigroup and also shows that the continuity of the map $S\to S$, $x\mapsto x^2$, in Theorem~\ref{theorem-5} is essential and cannot be replaced by the continuity of the map $S\to S$, $x\mapsto x^m$, for some $m\ge 3$.

\begin{example}\label{examle-6} {\rm Let $T$ be a Taimanov semigroup and $X$ be any $T_1$-topological space containing $T$ as a non-closed dense discrete subspace. Extend the semigroup operation of $T$ to a binary operation of $X$ defined by the formula:
$$xy=\begin{cases}
\zero_T&\mbox{if $x=y\in T$ or $\{x,y\}\cap\{\zero_T,\one_T\}\ne\emptyset$};\\
\one_T&\mbox{otherwise}.
\end{cases}
$$
Since $(xy)z=\zero_T=x(yz)$ for any $x,y,z\in X$ the extended operation is associative and turns $X$ into a commutative semigroup containing $T$ as a subsemigroup.
Observe that for $a\in\{\zero_T,\one_T\}$ the shift $l_a=r_a:X\to X$, $x\mapsto ax=xa=0_T$, is constant and hence continuous. For any $a\in T\setminus\{\zero_T,\one_T\}$ the shift $l_a=r_a:X\to X$, $x\mapsto xa=ax$, is almost constant in the sense that $l_a^{-1}(\one_T)=X\setminus\{a,\zero_T,\one_T\}$ and hence is continuous (as the set $\{a,\zero_T,\one_T\}$ is closed and open in $X$).
For any $a\in X\setminus T$ the shift $l_a=r_a:X\to X$, $x\mapsto xa=ax$, is almost constant in the sense that $l_a^{-1}(\one_T)=X\setminus\{\zero_T,\one_T\}$ and hence is continuous.
This shows that $X$ is a semitopological commutative semigroup containing $T$ as a non-closed dense subsemigroup. Observe also that for every $m\ge 3$ the map $X^m\to X$, $(x_1,\dots,x_m)\mapsto x_1\cdots x_m=\zero_T$, is constant and hence continuous. Then the map $X\to X$, $x\mapsto x^m$, is continuous as well.}
\end{example}

\begin{example}\label{examle-7} {\rm For any topological zero-semigroup $Z$ with zero $\zero_Z$ and any Taimanov semigroup $T$ endowed with the discrete topology, any map $h:T\to Z$ with $h(\zero_T)=h(\one_T)=\zero_Z$ is a continuous semigroup homomorphism. Hence there exist many topological (zero-)semigroups containing continuous homomorphic images of Taimanov semigroups as non-closed subsemigroups.}
\end{example}

\begin{proposition}\label{proposition-8} Any non-isomorphic homomorphic image $S$ of a Taimanov semigroup $T$ is a zero-semigroup.
\end{proposition}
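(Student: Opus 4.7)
The plan is to let $h\colon T\to S$ be a surjective homomorphism that is not an isomorphism (and hence not injective), and to show that $z:=h(\zero_T)$ satisfies $h(x)\cdot h(y)=z$ for all $x,y\in T$, which by surjectivity implies that $S$ is a zero-semigroup with zero $z$. First I would record the easy observation that $z=h(\zero_T)$ is the zero of $S$: since $h$ is a surjective homomorphism and $\zero_T$ is absorbing in $T$, the element $z$ is absorbing in $S$. Also $h(\one_T)$ acts as a zero in $S$ in the sense $h(\one_T)\cdot h(x)=h(\one_T\cdot x)=h(\zero_T)=z$ for all $x\in T$, because $\one_T\cdot x=\zero_T$ for every $x\in T$ by the definition of a Taimanov semigroup.

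The heart of the argument is then to prove the single key identity $h(\one_T)=z$. Once this is established, every product $h(x)h(y)=h(xy)$ lies in $\{h(\zero_T),h(\one_T)\}=\{z\}$, finishing the proof. To establish $h(\one_T)=z$, I would pick witnesses $a\neq b$ in $T$ with $h(a)=h(b)$ and run a short case analysis. If $\{a,b\}=\{\zero_T,\one_T\}$, the identity is immediate. If $a,b\in T\setminus\{\zero_T,\one_T\}$ are distinct, then $z=h(\zero_T)=h(a^2)=h(a)h(a)=h(a)h(b)=h(ab)=h(\one_T)$. The remaining case is when exactly one of $a,b$ lies in $\{\zero_T,\one_T\}$, say $b\in T\setminus\{\zero_T,\one_T\}$; here I would exploit the existence of a third element $c\in T\setminus\{\zero_T,\one_T,b\}$ and compare $h(bc)$ with $h(a)h(c)$: if $a=\zero_T$ then $h(b)=z$ gives $h(bc)=z$, while directly $h(bc)=h(\one_T)$; if $a=\one_T$ then $h(b)=h(\one_T)$ gives $h(bc)=h(\one_T\cdot c)=z$, while directly $h(bc)=h(\one_T)$. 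Either way we conclude $h(\one_T)=z$.

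The only subtlety is ensuring that the third element $c$ exists in the last case. This is automatic from the definition of a Taimanov semigroup: the requirement that $\one_T$ be the unique element of $TT\setminus\{\zero_T\}$ forces at least two distinct elements in $T\setminus\{\zero_T,\one_T\}$, so $|T|\geq 4$ and such a $c$ is always available. I expect this to be the only potential obstacle, and it is disposed of by a single sentence invoking the definition. No other delicate step is anticipated, as the rest of the argument is a direct and short computation using the multiplication table of $T$.
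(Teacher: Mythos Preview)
Your argument mirrors the paper's proof almost exactly: both reduce to showing $h(\one_T)=h(\zero_T)$ by picking $a\ne b$ with $h(a)=h(b)$ and splitting into the same three cases (both in $\{\zero_T,\one_T\}$, neither in $\{\zero_T,\one_T\}$, exactly one in $\{\zero_T,\one_T\}$), and in the last case both use a third element $c\in T\setminus\{\zero_T,\one_T\}$ distinct from the given one.

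There is, however, a genuine gap in your handling of the ``subtlety''. You assert that the clause ``$\one_T$ is the unique element of $TT\setminus\{\zero_T\}$'' forces $|T|\ge 4$. In the paper this sentence is a remark about how to recognise $\zero_T,\one_T$, not an axiom; the paper's own Example produces Taimanov semigroups of cardinality $2$ and $3$, and the paper's proof of this very proposition treats the case $|T|\le 3$ explicitly. For $|T|=3$, say $T=\{d,\zero_T,\one_T\}$, the map $h$ with $h(d)=h(\zero_T)\ne h(\one_T)$ is a non-injective homomorphism onto a two-element zero-semigroup in which $h(\one_T)\ne h(\zero_T)$; so your key intermediate claim $h(\one_T)=z$ is actually false in this instance, and your Case~3 cannot be run because no suitable $c$ exists.

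The repair is immediate and is exactly what the paper does: observe that when $|T|\le 3$ the multiplication table gives $TT=\{\zero_T\}$, so $T$ itself is a zero-semigroup and hence any homomorphic image $S$ satisfies $SS=h(TT)=\{h(\zero_T)\}$. One sentence disposing of $|T|\le 3$ at the outset makes your proof complete and essentially identical to the paper's.
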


\begin{proof} Fix a non-injective surjective homomorphism $h:T\to S$.
If $f(\zero_T)=f(\one_T)$, then $SS=f(T)\cdot f(T)=f(TT)=f(\{\zero_T,\one_T\})=\{f(\zero_T)\}$, which means that $S$ is a zero-semigroup. So, assume that $f(\zero_T)\ne f(\one_T)$. Since $f$ is not injective, there exist two distinct points $a,b\in T$ with $f(a)=f(b)$. Since $f(\zero_T)\ne f(\one_T)$, one of the points $a,b$, say $a$, belongs to $T\setminus\{\zero_T,\one_T\}$. If $b\notin\{\zero_T,\one_T\}$, then $ab=\one_T$ and $aa=\zero_T$ and hence $f(\one_T)=f(ab)=f(a)f(b)=f(a)f(a)=f(aa)=f(\zero_T)$, which contradicts our assumption. This contradiction shows that $b\in\{\zero_T,\one_T\}$ and hence $bc=\zero_T$ for any $c\in T$.

If $|T|\ge4$, then we can find a point $c\in T\setminus\{a,\zero_T,\one_T\}$ and conclude that $f(\one_T)=f(ac)=f(a)f(c)=f(b)f(c)=f(bc)=f(\zero_T)$, which contradicts our assumption.
So, $|T|\le 3$ and hence $T=\{a,\zero_T,\one_T\}$ and
\vskip4pt

\centerline{
$S=f(T)=\{f(a),f(\zero_T),f(\one_T)\}=\{f(b),\{f(\zero_T),f(\one_T)\}=\{f(\zero_T),f(\one_T)\}.$}
\vskip4pt

\noindent Then  $SS=f(\{xy:x,y\in\{\zero_T,\one_T\}\})=\{f(\zero_T)\}$, which means that $S$ is a zero-semigroup.
\end{proof}

Since the semigroup operation $Z\times Z\to\{\zero_Z\}\subset Z$ of any zero-semigroup $Z$ is constant and hence is continuous with respect to any topology on $X$, Proposition~\ref{proposition-8} implies the following corollary.

\begin{corollary}\label{corollary-9}
Every non-isomorphic homomorphic image $S$ of a Taimanov semigroup is a topological semigroup with respect to any topology on $S$.
\end{corollary}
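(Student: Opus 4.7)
My plan is to derive Corollary~\ref{corollary-9} as an immediate consequence of Proposition~\ref{proposition-8}, which asserts that every non-isomorphic homomorphic image $S$ of a Taimanov semigroup is a zero-semigroup. The only remaining content is the general topological fact that the multiplication of any zero-semigroup is continuous with respect to every topology on its underlying set.

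First I would invoke Proposition~\ref{proposition-8} to replace $S$ by a zero-semigroup, obtaining a distinguished zero element $\zero_S\in S$ such that $xy=\zero_S$ for all $x,y\in S$. Then I would fix an arbitrary topology $\tau$ on $S$ and verify that the multiplication map $\mu:S\times S\to S$, $(x,y)\mapsto xy$, is continuous: since $\mu$ is constantly equal to $\zero_S$, the preimage $\mu^{-1}(U)$ of any $U\in\tau$ equals either $S\times S$ (if $\zero_S\in U$) or $\emptyset$ (otherwise), both of which are open in the product topology on $S\times S$ induced by $\tau$. Therefore $(S,\tau)$ is a topological semigroup.

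There is no real obstacle to overcome here; the statement is essentially a reformulation of Proposition~\ref{proposition-8} together with the trivial observation that constant maps are continuous. The only thing worth emphasizing is that continuity does not require any separation axiom on $\tau$, so the corollary holds for \emph{any} topology, not just $T_1$ or Hausdorff ones. Accordingly, the proof is a one-line deduction and can be presented as such.
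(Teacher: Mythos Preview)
Your proposal is correct and matches the paper's own argument essentially verbatim: the paper also deduces the corollary directly from Proposition~\ref{proposition-8} by observing that the multiplication of a zero-semigroup is a constant map and hence continuous with respect to any topology.
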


We call that a semigroup $S$ is \emph{algebraically complete} in a class $\mathscr{S}$ of semitopological semigroups if $S$ is a closed subsemigroup in each semitopological semigroup $T\in\mathscr S$ containing $S$ as a subsemigroup. Theorem~\ref{theorem-5} implies the following

\begin{corollary}\label{corollary-10} Each Taimanov semigroup $T$ is algebraically complete in the class of square-topological semigroups satisfying the separation axiom $T_1$. In particular, $T$ is algebraically complete in the class of $T_1$-topological semigroups.
\end{corollary}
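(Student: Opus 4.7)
The plan is to observe that Corollary~\ref{corollary-10} is essentially a translation of Theorem~\ref{theorem-5} into the terminology of algebraic completeness, combined with the already-noted fact that every topological semigroup is square-topological.

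First, I would unpack the definition of algebraic completeness: to show that a Taimanov semigroup $T$ is algebraically complete in the class $\mathscr{S}$ of square-topological $T_1$-semigroups, I must verify that whenever $S\in\mathscr S$ contains $T$ as a subsemigroup, $T$ is closed in $S$. But this is exactly the conclusion of Theorem~\ref{theorem-5}, so the first assertion follows immediately.

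For the second assertion, I would invoke the observation made right before Theorem~\ref{theorem-5}: every topological semigroup is square-topological, since the map $x\mapsto x^2$ is the composition of the diagonal embedding $x\mapsto(x,x)$ with the (jointly continuous) multiplication. Hence the class of $T_1$-topological semigroups is contained in the class of square-topological $T_1$-semigroups, and algebraic completeness in the larger class trivially implies algebraic completeness in the smaller one.

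There is no real obstacle here; the corollary is purely a repackaging. The only subtlety worth flagging is that the definition of algebraic completeness is stated for \emph{semitopological} ambient semigroups, so one should make sure that ``square-topological semigroup satisfying $T_1$'' is indeed a subclass of semitopological $T_1$-semigroups (which it is by definition, since square-topological semigroups are semitopological by construction).
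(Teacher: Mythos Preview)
Your proposal is correct and matches the paper's approach: the paper gives no explicit proof at all, merely writing ``Theorem~\ref{theorem-5} implies the following'' before stating the corollary, and your argument spells out exactly this implication together with the already-recorded fact that every topological semigroup is square-topological.
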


\begin{remark}
Corollary~\ref{corollary-9} implies that for any Taimanov semigroup $T$ and any non-iso\-morphic surjective homomorphism $h:T\to S$ with the infinite image $S=h(T)$ the semigroup $S$ is a dense proper subsemigroup of some (compact) Hausdorff topological zero-semigroup.
\end{remark}

%-------------------------------------------------------------------%
\medskip
%%%%%%%%%%%%%%%%%%%%%%%%%%%%%%%%%%%%%%%%%%%%%%%%%%%%%
\noindent{\bf Acknowledgements.} We acknowledge Taras Banakh and the referee for useful important comments and suggestions.
%%%%%%%%%%%%%%%%%%%%%%%%%%%%%%%%%%%%%%%%%%%%%%%%%%%%%%%%%%%%

%\newpage


\begin{thebibliography}{00}
\parskip1pt

\bibitem{BPS} T.~Banakh, I.~Protasov, O.~Sipacheva, {\em Topologizations of a set endowed with an action of a monoid}, Topology Appl. {\bf 169} (2014) 161--174.

\bibitem{Bardyla-2016}
S. Bardyla,
\emph{Classifying locally compact semitopological polycyclic monoids},
Math. Bull. Shevchenko Sci. Soc. \textbf{13} (2016) 21--28.

\bibitem{Bardyla-Gutik-2016}
S. Bardyla, O. Gutik,
\emph{On a semitopological polycyclic monoid},
Algebra Discr. Math. \textbf{21}:2 (2016) 163--183.


\bibitem{Bertman-West-1976}
M.O.~Bertman, T.T.~West,
{\it Conditionally compact bicyclic semitopological semigroups,}
Proc. Roy. Irish Acad. {\bf A76}:21--23 (1976) 219--226.

\bibitem{Carruth-Hildebrant-Koch-1983-1986}
J.H.~Carruth, J.A.~Hildebrant, R.~J.~Koch,
\emph{The Theory of Topological Semigroups}, Vols I and
II, Marcell Dekker, Inc., New York and Basel, 1983 and 1986.

\bibitem{Chuchman-Gutik-2010}
I.~Chuchman, O.~Gutik,
\emph{Topological monoids of almost monotone injective co-finite partial selfmaps of the set of positive integers},
Carpathian Math. Publ. \textbf{2}:1 (2010) 119--132.

\bibitem{Chuchman-Gutik-2011}
I.~Chuchman, O.~Gutik,
\emph{On monoids of injective partial selfmaps almost everywhere the identity},
Demonstr. Math. \textbf{44}:4 (2011) 699--722.



\bibitem{Clifford-Preston-1961-1967}
A.H.~Clifford, G.B.~Preston,
\emph{The Algebraic Theory of Semigroups}, Vols. I and II,
Amer. Math. Soc. Surveys {\bf 7}, Providence, R.I.,  1961 and  1967.

\bibitem{Eberhart-Selden-1969}
C.~Eberhart, J.~Selden,
{\em On the closure of the bicyclic semigroup},
Trans. Amer. Math. Soc. {\bf 144} (1969) 115--126.


\bibitem{Engelking-1989}
R.~Engelking,
\emph{General Topology},
2nd ed., Heldermann, Berlin, 1989.

\bibitem{Fihel-Gutik-2011}
I. Fihel, O. Gutik,
\emph{On the closure of the extended bicyclic semigroup},
Carpathian Math. Publ. \textbf{3}:2 (2011) 131--157.

\bibitem{Gutik-2015}
O. Gutik,
\emph{On the dichotomy of a locally compact semitopological bicyclic monoid with adjoined zero},
Visn. L'viv. Univ., Ser. Mekh.-Mat. \textbf{80} (2015) 33--41.

\bibitem{Gutik-Maksymyk-2016??}
O. Gutik, K. Maksymyk,
\emph{On semitopological bicyclic extensions of linearly ordered groups},
Mat. Metody Fiz.-Mekh. Polya (submitted) (arXiv:1608.00959).


\bibitem{Gutik-Pozdnyakova-2014}
O.~Gutik, I.~Pozdnyakova,
\emph{On monoids of monotone injective partial selfmaps of $L_n\times_{\operatorname{lex}}\mathbb{Z}$ with co-finite domains and images},
Algebra Discr. Math. \textbf{17}:2 (2014) 256--279.

\bibitem{Gutik-Repovs-2011}
O.~Gutik, D.~Repov\v{s},
\emph{Topological monoids of monotone, injective partial selfmaps of $\mathbb{N}$ having cofinite domain and image},
Stud. Sci. Math. Hungar. \textbf{48}:3 (2011) 342--353.

\bibitem{Gutik-Repovs-2012}
O.~Gutik, D.~Repov\v{s},
\emph{On monoids of injective partial selfmaps of integers with cofinite domains and images},
Georgian Math. J. \textbf{19}:3 (2012) 511--532.

%\bibitem{Hewitt-Roos-1963}
%E.~Hewitt, K.~A.~Roos,
%\emph{Abstract Harmonic Analysis}, Vol.~1,
%Springer, Berlin, 1963.

\bibitem{Markov-1945}
A.A. Markov, \emph{On free topological groups}, Izvestia Akad. Nauk
SSSR \textbf{9} (1945), 3--64 (in Russian); English version in:
Transl. Amer. Math. Soc. (1) \textbf{8} (1962) 195--272.

\bibitem{Mesyan-Mitchell-Morayne-Peresse-2016}
Z. Mesyan, J.D. Mitchell, M. Morayne, Y.H. P\'{e}resse,
\emph{Topological graph inverse semigroups},
Topology Appl. \textbf{208} (2016) 106--126.

\bibitem{Olshansky-1980}
A.Yu. Ol'shanskiy,
\emph{Remark on countable non-topologized groups},
Vestnik Moscow Univ. Ser. Mech. Math. \textbf{39} (1980) p.103 (in Russian).

%\bibitem{Pontryagin-1966}
%L.~S.~Pontryagin,
%\emph{Topological Groups},
%Gordon $\&$ Breach, New York ets, 1966.

\bibitem{Ruppert-1984}
W.~Ruppert,
\emph{Compact Semitopological Semigroups: An Intrinsic Theory},
Lect. Notes Math., \textbf{1079}, Springer, Berlin, 1984.

\bibitem{Taimanov-1973}
A.D. Taimanov,
\emph{An example of a semigroup which admits only the discrete topology},
Algebra i Logika \textbf{12}:1 (1973) 114--116 (in Russian).
%, English transl. in: Algebra and Logic \textbf{12}:1 (1973), 64--65.

\bibitem{Taimanov-1975}
A.D.~Taimanov,
{\it The topologization of commutative semigroups,}
Mat. Zametki \textbf{17}:5 (1975), 745--748 (in Russian), English transl. in:  Math. Notes \textbf{17}:5 (1975), 443--444.

\bibitem{Zel} Y.~Zelenyuk, {\em On topologizing groups}, J.~Group Theory. {\bf 10}:2 (2007) 235--244.

\end{thebibliography}
\end{document}